\title{Linear forms of a given Diophantine type.
                              \thanks{ This research was  supported by
                              RFBR (grant $\textup N^{\circ}$ 09--01--00371a).
                              The first author was also supported by
                              the grant of the President of Russian Federation
                              $\textup N^\circ$ MK--4466.2008.1.
                             }}
\author{Oleg\,N.\,German, Nikolay\,G.\,Moshchevitin}
\date{}
\theoremstyle{definition}
\newtheorem{definition}{Definition}
\theoremstyle{remark}
\newtheorem{remark}{Remark}
\theoremstyle{plain}
\newtheorem{theorem}{Theorem}
\newtheorem{lemma}{Lemma}
\newtheorem{corollary}{Corollary}
\DeclareMathOperator{\spanned}{span}
\DeclareMathOperator{\interior}{int}
\renewcommand{\vec}[1]{\mathbf{#1}}
\renewcommand{\geq}{\geqslant}
\renewcommand{\leq}{\leqslant}
\newcommand{\e}{\varepsilon}
\newcommand{\R}{\mathbb{R}}
\newcommand{\Z}{\mathbb{Z}}
\newcommand{\Q}{\mathbb{Q}}
\begin{document}

  \maketitle

  \begin{abstract}
    We prove a result on the existence
    of  linear forms of a given Diophantine type.
  \end{abstract}

  \section{Approximation to irrational numbers} \label{sec:approx_of_irrationals}

  Let $\alpha$ be an irrational number. The Hurwitz theorem says that the inequality
  \[ \|q\alpha \| <\frac{1}{\sqrt{5} q}, \]
  where $\|\cdot\|$ denotes the distance to the nearest integer, has infinitely many solutions in
  integer $q$. Moreover, there is a countable set of numbers $\alpha$ for which this inequality
  is exact, that is for every positive $\e$ the inequality
  \[ \|q\alpha\|<\left(\frac{1}{\sqrt{5}}-\e\right)\frac{1}{ q} \]
  admits only a finite number of solutions in integer $q$.

  The numbers $\lambda$ under the condition that there is an $\alpha=\alpha(\lambda)$ for which
one has
  \[ \lambda = \liminf_{q\to+\infty} q||q\alpha || \]
  form {\it the Lagrange spectrum}. It is a well-known
  fact that the Lagrange  spectrum has a discrete part
  \[ \frac{1}{\sqrt{5}},\frac{1}{\sqrt{8}},\ldots, \]
  and the maximal $\lambda$ for which there are continuously many $\alpha (\lambda )$ is
  $\lambda=1/3$. It is also well-known that the Lagrange spectrum contains an interval
  $[0;\lambda^*]$. This interval is known as {\it Hall's ray} as M. Hall \cite{HALL} was the first
  to prove that $\lambda^*>0$. These and many other results concerning the Lagrange spectrum can be
  found in the book \cite{CusM}.

  Furthermore, V. Jarnik (see \cite{Jarnik1}, Satz 6) showed that for every decreasing function
  $\varphi(y)=o(y^{-1})$ there is an uncountable set of real numbers $\alpha$ satisfying the following
  conditions: the inequality
  \[ \|q\alpha\|<\varphi(q) \]
  has infinitely many solutions but for any $\e >0$ the stronger inequality
  \[ \|q\alpha\|<(1-\e)\varphi(q) \]
  has only a finite number of solutions.

  The results mentioned above use the theory of continued fractions.

  Recently V. Beresnevich, H. Dickinson and S. Velani in \cite{BDV} and Y. Bugeaud in \cite{BU},
  \cite{B1} obtained a precise  metric version of Jarnik's result. For example, Y. Bugeaud
  \cite{BU} showed that for every decreasing function $\varphi:\R_+\to\R_+$, such that the function
  $x\mapsto x^2\varphi(x)$ is non-increasing and the series
  \begin{equation} \label{seri}
    \sum_{x=1}^\infty x\varphi(x)
  \end{equation}
  converges, the sets
  \[ \mathcal K(\varphi)=\left\{\alpha\in\R:\ \left|\alpha-\frac{p}{q}\right|<\varphi(q)\
     \text{ for infinitely many rationals }\ \frac{p}{q}\right\} \]
  and
  \[ \textup{EXACT}(\varphi)=\mathcal K(\varphi)\setminus
     \left(\bigcup_{\e>0}\mathcal K((1-\e)\varphi)\right) \]
  have the same Hausdorff dimension. Moreover, Y. Bugeaud \cite{BU} proved that the sets
  $\mathcal K(\varphi)$ and $\textup{EXACT}(\varphi)$ have the same $\mathcal H^f$-Hausdorff measure for a
  certain choice of the dimension function $f:\R_+\to\R_+$. A certain result in the case when the
  series \eqref{seri} diverges was obtained by Y. Bugeaud in \cite{B1}.

  \section{General result by Jarnik} \label{sec:Jarnik}

  Throughout the paper for each $\vec x=(x_1,\ldots,x_n)$ we denote by $|\vec x|$ the Euclidean
  norm
  \[ |\vec x|=|\vec x|_2=(x_1^2+\ldots+x_n^2)^{1/2} \]
  and by $|\vec x|_\infty$ the sup-norm
  \[ |\vec x|_\infty=\max_{1\leq i\leq n}|x_i|. \]
  We also denote by $\langle\pmb\alpha,\pmb\beta\rangle$ the inner product of
  $\pmb\alpha,\pmb\beta\in\R^n$. For a fixed $\pmb\alpha$ we get a linear form
  $\langle\pmb\alpha,\cdot\rangle$.

  We formulate a general result from \cite{Jarnik2}. Consider a real matrix
  \[ \Theta = \left( \begin{array}{ccc}
                       \theta_{1,1} & \cdots & \theta_{1,n} \\
                       \cdots & \cdots & \cdots \\
                       \theta_{m,1} & \cdots & \theta_{m,n}
                     \end{array}\right). \]
  Given a function $\varphi:\R_+\to\R_+$ we say that a set of $n+m$ integers
  \[ x_1,\ldots,x_n,\,y_1,\ldots,y_m \]
  is a \emph{$\varphi$-approximation for $\Theta$} if with
  $\pmb\theta_i=(\theta_{i,1},\ldots,\theta_{i,n})$ we have
  \[ \begin{cases}
       |\langle\pmb\theta_i,\vec x\rangle-y_i|<\varphi(|\vec x|_\infty),\quad i=1,\ldots m, \\
       |\vec x|_\infty>0.
     \end{cases} \]
  V. Jarnik considered an arbitrary non-increasing function $\varphi:\R_+\to\R_+$ and an arbitrary
  function $\lambda:\R_+\to\R_+$, such that the following conditions are satisfied:

  \begin{list}{$\bullet$}{}
    \item $\lambda(x)\to0,\quad x\to\infty$; \vphantom{$\displaystyle\int$}
    \item the functions $\varphi(x)\cdot x^{1/k}$,\ \ $k=1,\ldots,m$,\ \
        $\varphi(x)\cdot x^{1+\e}$ and $\varphi(x)\cdot x^{(n-1)/m}$ are monotone;
    \item the integral $\displaystyle\int_A^\infty x^{n-1}(\varphi(x))^mdx$ converges.
  \end{list}
  For such $\varphi(x)$ and $\lambda(x)$ he proved in \cite{Jarnik2}\footnote{for the special case
  of simultaneous approximations ($n=1$) see \cite{Jarnik1}} that there is an uncountable set of
  matrices $\Theta$, each having infinitely many $\varphi$-approximations but not more than a
  finite collection of $\lambda\varphi$-approximations.

  Another result by Jarnik (see \cite{Jarnik2}, Th\'eor\`eme B) gives a more precise statement under
  stronger conditions on $\varphi(x)$. Namely, he considered an arbitrary function $\varphi(x)$
  satisfying the following conditions:
  \begin{list}{$\bullet$}{}
    \item $\varphi(x)\cdot x\to0,\quad x\to\infty$; \vphantom{$\displaystyle\int$}
    \item the functions $\varphi(x)\cdot x$ and $\varphi(x)\cdot x^{(n-1)/m}$ are monotone;
    \item the integral $\displaystyle\int_A^\infty x^{n-1}(\varphi(x))^mdx$ converges;
  \end{list}
  and proved the existence of an uncountable set of matrices $\Theta$, each having infinitely many
  $\varphi$-approximations but not more than a finite collection of $(1-\e)\varphi$-approximations,
  for any positive $\e$.

  So we see that the additional condition
  \[ \varphi(x)=o(x^{-1}),\quad x\to\infty, \]
  allows obtaining sharper results concerning systems of linear forms of a given Diophantine type.

  Here we would like to note that a nice metric generalization of Jarnik's result was obtained by
  V. Beresnevich, H. Dickinson and S. Velani in \cite{BDV}. There the authors deal with a general
  setting for Diophantine approximations for systems of linear forms and prove certain results on
  the ``exact logarithmic'' order of approximations.

  In the next two  sections we discuss some improvements of Jarnik's result in the cases
  $n=1$, $m\geq2$ (simultaneous approximations) and $m=1$, $n\geq2$ (linear forms). Here we should
  note that there is an old problem, still unsolved, to generalize  the result on the existence
  of Hall's ray mentioned in Section \ref{sec:approx_of_irrationals} to the cases of simultaneous
  approximations and linear forms (and even to the general case). This problem seems to be a
  difficult one.

  \section{Simultaneous approximations}

In this section for convenience we put
\[
\varphi(t)  =\frac{\psi (t)}{t^{1/m}}.\]

  In Jarnik's theorem discussed in Section \ref{sec:Jarnik} by certain reasons the monotonicity
  conditions may be omitted. This observation in the case of simultaneous approximations (see also
  \cite{Jarnik1}, Satz 5) leads to the following

  \begin{theorem}
    Let $m$ be a positive integer.
    Given an arbitrary decreasing function $\psi:\R_+\to\R_+$ and an arbitrary function
    $\lambda:\R_+\to\R_+$, decreasing to zero, suppose that the integral
    \[ \int_A^\infty \frac{(\psi(x))^m}{x}dx \]
    converges. Then one can find an uncountable set of
    $\pmb\alpha=(\alpha_1,\dots,\alpha_m)\in\R^m$, such that for every sufficiently large positive integer $q$ one has
    \[ \max_{1\leq i\leq m}\|q\alpha_i\|\geq\frac{\lambda (q)\psi(q)}{q^{1/m}}, \]
    but the inequality
    \[ \max\limits_{1\leq i\leq m}\|q\alpha_i\|\leq\frac{\psi(q)}{q^{1/m}} \]
    has infinitely many solutions in positive integer $q$.
  \end{theorem}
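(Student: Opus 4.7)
The plan is to perform a Cantor-style nested construction in $[0,1]^m$. For each $q\ge 1$ and each $\vec p\in\Z^m$ introduce the closed \emph{good box}
\[
  G(\vec p,q)=\{\pmb\alpha\in\R^m:\max_i|\alpha_i-p_i/q|\le\psi(q)/q^{1+1/m}\}
\]
and the open \emph{bad box}
\[
  B(\vec p,q)=\{\pmb\alpha\in\R^m:\max_i|\alpha_i-p_i/q|<\lambda(q)\psi(q)/q^{1+1/m}\}.
\]
Then $\pmb\alpha$ satisfies the upper inequality $\max_i\|q\alpha_i\|\le\psi(q)/q^{1/m}$ iff $\pmb\alpha\in\bigcup_{\vec p}G(\vec p,q)$, and it \emph{violates} the required lower bound at $q$ iff $\pmb\alpha\in\bigcup_{\vec p}B(\vec p,q)$. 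So the goal is to exhibit uncountably many $\pmb\alpha$ lying in good boxes for infinitely many $q$ and in no bad box once $q$ is large.

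The key quantitative input is the volume estimate
\[
  \vol\Bigl(\bigcup_{\vec p}B(\vec p,q)\cap[0,1]^m\Bigr)\le C\,\frac{(\lambda(q)\psi(q))^m}{q},
\]
and because the bad boxes for a fixed $q$ are equispaced on a regular grid, the same bound (with a different constant) holds for the relative \emph{density} of $\bigcup_{\vec p}B(\vec p,q)$ inside any axis-parallel subcube of side at least $1/q$. Since $\lambda\to0$ and the convergence of $\int_A^\infty\psi(x)^m/x\,dx$ forces $\sum_{q\ge1}\psi(q)^m/q<\infty$, the density of $\bigcup_{\vec p,\,q>Q}B(\vec p,q)$ in any fixed region is bounded by $C\lambda(Q)^m\sum_{q>Q}\psi(q)^m/q$, which tends to zero as $Q\to\infty$.

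Now build inductively a dyadic tree of closed boxes $B_{\vec s}\subset[0,1]^m$, $\vec s\in\{0,1\}^k$, together with denominators $q_{\vec s}$ and an increasing sequence of thresholds $T_k\to\infty$, such that (i) $B_{\vec s 0}$ and $B_{\vec s 1}$ are disjoint and contained in $\interior B_{\vec s}$; (ii) $B_{\vec s}\subset G(\vec p_{\vec s},q_{\vec s})$ for some $\vec p_{\vec s}\in\Z^m$; and (iii) $B_{\vec s}$ is disjoint from every $B(\vec p,q)$ with $q\le T_k$. Given $B_{\vec s}$ at level $k$, pick $q^*>\max(T_k,q_{\vec s})$ so large that $\psi(q^*)/(q^*)^{1+1/m}\ll\diam B_{\vec s}$, then choose $\vec p^*\in\Z^m$ with $G(\vec p^*,q^*)\subset\interior B_{\vec s}$ such that no bad box with $q\in(T_k,q^*)$ meets $G(\vec p^*,q^*)$ (possible because, by the density estimate, the ``forbidden'' positions for $\vec p^*$ cover less than half of the lattice candidates in $B_{\vec s}$). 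Next choose $T_{k+1}>q^*$ so large that the density in $G(\vec p^*,q^*)$ of $\bigcup_{\vec p,\,q\in(q^*,T_{k+1}]}B(\vec p,q)$ is at most one half. After excising the central bad box $B(\vec p^*,q^*)$ (occupying a $\lambda(q^*)^m$-fraction of $G(\vec p^*,q^*)$) together with all other bad boxes having $q\in(T_k,T_{k+1}]$, the remainder has positive measure and admits two disjoint closed sub-boxes $B_{\vec s 0},B_{\vec s 1}$.

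Every infinite branch $\vec s\in\{0,1\}^{\mathbb{N}}$ determines a unique $\pmb\alpha_{\vec s}\in\bigcap_k B_{\vec s|_k}$; by (ii) this $\pmb\alpha_{\vec s}$ lies in $G(\vec p_{\vec s|_k},q_{\vec s|_k})$ for every $k$, giving infinitely many solutions $q=q_{\vec s|_k}$ of the upper inequality, while (iii) ensures avoidance of every bad box with $q>T_1$, which is the required lower bound for all sufficiently large $q$. Distinct branches give distinct points by (i), so the family is uncountable. The delicate part is the inductive step: one must dodge \emph{all} bad boxes with denominator up to the new threshold while still leaving two well-separated sub-boxes inside the tiny good box $G(\vec p^*,q^*)$. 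It works precisely because the central bad box shrinks by the factor $\lambda(q^*)^m$, because the high-denominator bad boxes have uniformly small density thanks to the convergence of $\sum\psi(q)^m/q$, and because the finitely many intermediate denominators in $(T_k,q^*)$ can be dodged by a generic choice of $\vec p^*$.
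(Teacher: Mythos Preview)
The paper does not actually prove this theorem: it is stated as a direct consequence of Jarnik's general result from \cite{Jarnik1}, \cite{Jarnik2}, after noting that ``by certain reasons the monotonicity conditions may be omitted.'' So there is no in-paper proof to compare against. Your Cantor-tree construction with good boxes $G(\vec p,q)$ and bad boxes $B(\vec p,q)$, driven by the density bound coming from the convergence of $\sum_q\psi(q)^m/q$, is precisely the classical Jarnik approach, so methodologically you are in line with what the paper invokes.

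The sketch is essentially correct, but one step is garbled. You write ``choose $T_{k+1}>q^*$ so large that the density in $G(\vec p^*,q^*)$ of $\bigcup_{q\in(q^*,T_{k+1}]}B(\vec p,q)$ is at most one half''; this is backwards, since enlarging $T_{k+1}$ only \emph{adds} bad boxes. What you need is that the density over \emph{all} $q>q^*$ is already below $1/2$, and that is guaranteed not by choosing $T_{k+1}$ large but by choosing $q^*$ large (so that $\lambda(q^*)^m\sum_{q>q^*}\psi(q)^m/q$ is small). Once that is in place, $T_{k+1}$ can be taken arbitrarily large; you should in fact take it at least of order $(q^*)^{1+1/m}/\psi(q^*)$ so that the side of $B_{\vec s0},B_{\vec s1}$ is $\geq 1/T_{k+1}$, which is what makes your density estimate legitimate at the next level (the estimate only holds in boxes of side $\geq 1/q$). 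Finally, ``positive measure $\Rightarrow$ two disjoint closed sub-boxes'' is fine here because you are removing only \emph{finitely} many open boxes, but you should say so explicitly. With these clarifications the argument goes through.
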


  In \cite{MAH} R. Akhunzhanov and  N. Moshchevitin  generalizing the approach from \cite{MOSS}
  proved the following

  \begin{theorem}
    Let $m $ be a positive integer. Then there are explicit positive constants $A_m$, $B_m$ with the
    following property. Given an arbitrary non-increasing function $\psi:\R_+\to\R_+$,
    $\psi(1)\leq A_m$, one can find an uncountable set of vectors
    $\pmb\alpha=(\alpha_1,\dots,\alpha_m)\in\R^m$ such that for every positive integer $q$
    \[ \max_{1\leq i\leq m}\|q\alpha_i\|{\geq} \frac{\psi(q)}{q^{1/m}}\left(1-B_m\psi(q)\right), \]
    but the inequality
    \[ \max_{1\leq i\leq m}\|q\alpha_i\|{\leq}
       \frac{\psi(q)}{q^{1/m}}\left(1{+}B_m\psi(q)\right) \]
    has infinitely many solutions in positive integer $q$.
  \end{theorem}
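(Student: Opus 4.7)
The plan is to realise each $\pmb\alpha$ as the common point of a nested sequence of boxes $U_1\supset U_2\supset\cdots$ in $\R^m$ whose centres $\vec p_k/q_k$ are chosen to be simultaneous best approximations of the prescribed quality. At stage $k$ the pair $(q_k,\vec p_k)$ will be selected so that every $\pmb\alpha\in U_k$ satisfies $\max_i|q_k\alpha_i-p_{k,i}|\leq\psi(q_k)q_k^{-1/m}(1+B_m\psi(q_k))$, which directly yields the upper-bound conclusion along the infinite subsequence $\{q_k\}$. Splitting $U_k$ into two disjoint admissible sub-boxes whenever possible produces an uncountable Cantor-type family of limit points.

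Given $(q_k,\vec p_k,U_k)$, I would take $q_{k+1}$ to be the smallest integer greater than $q_k$ for which there exists $\vec p_{k+1}\in\Z^m$ and a sub-box $U_{k+1}\subset U_k$ centred at $\vec p_{k+1}/q_{k+1}$ on which
\[
  \psi(q_{k+1})q_{k+1}^{-1/m}(1-B_m\psi(q_{k+1}))\leq\max_{i}|q_{k+1}\alpha_i-p_{k+1,i}|\leq\psi(q_{k+1})q_{k+1}^{-1/m}(1+B_m\psi(q_{k+1}))
\]
holds uniformly. The sub-box $U_{k+1}$ is simultaneously shrunk to be disjoint from every forbidden inner box
\[
  B(q,\vec p)=\{\pmb\alpha\in\R^m:\max_{i}|q\alpha_i-p_i|<\psi(q)q^{-1/m}(1-B_m\psi(q))\},
\]
where $q_k<q<q_{k+1}$ and $\vec p\in\Z^m$; only finitely many such boxes can intersect $U_k$, so this shrinking is legitimate.

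The technical heart is the exclusion at the \emph{intermediate} denominators $q\in(q_k,q_{k+1})$. Following the strategy of \cite{MOSS} and \cite{MAH}, I would argue that if $\vec p_k/q_k$ is already pinned to the prescribed shell over $U_k$, then any rational $\vec p/q$ with $q\in(q_k,q_{k+1})$ falling inside its forbidden inner box would, via a Minkowski-type determinant identity applied to the pair $(q_k\pmb\alpha-\vec p_k,\,q\pmb\alpha-\vec p)$, produce a lattice vector contradicting either the shell constraint at $q_k$ or the minimality of $q_{k+1}$. Quantifying the contradiction leaves a margin of order $B_m\psi(q)^2$ between the admissible shell and the forbidden inner box, and this margin fixes the admissible value of $B_m$. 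The smallness condition $\psi(1)\leq A_m$ is what allows the same argument to run already at $q=1$, which is why the lower bound holds for \emph{every} positive integer $q$ rather than only for $q$ sufficiently large.

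The main obstacle is the absence of any integrability hypothesis on $\psi$: unlike in Jarnik's theorems of Section \ref{sec:Jarnik}, the total volume of forbidden boxes with intermediate denominators need not be summable, so a naive volume or Borel--Cantelli argument is unavailable. The resolution is to exploit the rigidity of the best-approximation framework—intermediate rationals are not distributed freely but are constrained by the lattice geometry relative to $\pmb\alpha$—and to carry out the whole construction synchronously with this constraint so that the exclusion at each step is geometric rather than probabilistic. The verification that at least two admissible choices of $\vec p_{k+1}$ exist at infinitely many levels (required for uncountability) is a counting argument in the current shell that succeeds precisely when the slack $B_m\psi(q_k)$ is comparable to the spacing of denominator-$q_{k+1}$ rationals inside $U_k$, and this comparison determines the explicit value of $A_m$.
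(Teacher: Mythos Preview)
The paper does not contain a proof of this theorem. It is quoted as a known result of Akhunzhanov and Moshchevitin, with the reference \cite{MAH} (building on \cite{MOSS}) given in lieu of a proof; the paper's own arguments are devoted exclusively to the linear-forms case (Theorems~\ref{t:intro_main} and~\ref{t:main}). Consequently there is no ``paper's own proof'' of this statement against which to compare your proposal.

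That said, your outline is broadly consonant with the approach the paper attributes to \cite{MAH}/\cite{MOSS}: a nested Cantor-type construction with best approximations chosen at each stage and a geometric (rather than measure-theoretic) exclusion of intermediate denominators. But what you have written is a plan, not a proof. The decisive step---showing that for \emph{every} $q$ between $q_k$ and $q_{k+1}$ the forbidden inner box $B(q,\vec p)$ misses the admissible region, with a quantitative margin of order $\psi(q)^2$---is asserted rather than carried out; the ``Minkowski-type determinant identity'' you invoke is not stated, and it is precisely in making that identity do the work (and in tracking the constants $A_m,B_m$ through it) that the content of \cite{MAH} lies. Similarly, the existence of the ``smallest $q_{k+1}$'' with the listed properties and the branching needed for uncountability both require verification, not just announcement. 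If you want a proof rather than a strategy, you will need to reproduce the argument of \cite{MAH} in detail.
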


  Here we would like to note that in the paper \cite{MOSS} the author attributes to V. Jarnik a
  stronger result than he actually proved in \cite{Jarnik1}, \cite{Jarnik2}.

  \section{Linear forms}

In this section we put
\[
\varphi(t) =\frac{\psi(t)}{t^n}.\]
  Jarnik's theorem discussed in  Section \ref{sec:Jarnik} in the case of  linear forms leads to the
  following

  \begin{theorem}
    Let $n$ be a positive integer.
    Given an arbitrary decreasing function $\psi:\R_+\to\R_+$ and an arbitrary function
    $\lambda:\R_+\to\R_+$, decreasing to zero, suppose that the integral
    \[ \int_A^\infty \frac{\psi(x)}{x}dx \]
    converges. Then there is an uncountable set of $\pmb\alpha\in\R^n$, such that for all
    $\vec x\in\Z^n\setminus\{\vec 0\}$
with $|\vec x |$ sufficiently large one has
    \[ \|\langle\pmb\alpha,\vec x\rangle\|\geq
       \frac{\lambda(\vec x)\psi(\vec x)}{|\vec x|_\infty^n}, \]
    but the inequality
    \[ \|\langle\pmb\alpha,\vec x\rangle\|\leq\frac{\psi(\vec x)}{|\vec x|_\infty^n} \]
    has infinitely many solutions in $\vec x\in\Z^n$.
  \end{theorem}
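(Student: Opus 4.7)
My plan is a Cantor-tree construction in $\R^n$. I would fix a ball $\mathcal B_0 \subset \R^n$ and inductively build nested balls $\mathcal B_\sigma$ indexed by finite binary strings, with $\mathcal B_{\sigma 0}, \mathcal B_{\sigma 1}$ disjoint subsets of $\mathcal B_\sigma$ and $\diam \mathcal B_\sigma \to 0$. Each infinite branch then collapses to a single $\pmb\alpha$, producing an uncountable family of candidates. I would also fix in advance two increasing integer sequences $M_0 < M_1 < \ldots$ (cut-off thresholds for excluding bad integer vectors on the lower bound) and $N_1 < N_2 < \ldots$ (sizes of approximants forcing the upper bound), linked through a schedule that keeps $M_{k-1}\rho_k \geq 1$ always, where $\rho_k \asymp \psi(N_k)/N_k^{n+1}$.

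\emph{Forcing a $\varphi$-approximation at level $k-1$.} Given $\mathcal B_\sigma$ of radius $r_{k-1}$, choose $N_k$ with $N_k \gg 1/r_{k-1}$ and pick an integer vector $\vec x_k$ with $|\vec x_k|_\infty \in [N_k, 2N_k]$ of generic direction. The set $\{\pmb\alpha : \|\langle\vec x_k, \pmb\alpha\rangle\| \leq \psi(|\vec x_k|_\infty)/|\vec x_k|_\infty^n\}$ is a union of parallel slabs of width $\asymp \psi(N_k)/N_k^{n+1}$ spaced at $\asymp 1/N_k \ll r_{k-1}$, so I can place a ball $\mathcal B' \subset \mathcal B_\sigma$ of radius $\rho_k \asymp \psi(N_k)/N_k^{n+1}$ inside one slab. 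A slight recentering further ensures $\|\langle \vec x_k,\pmb\alpha\rangle\| \geq \lambda(N_k)\psi(N_k)/N_k^n$ throughout $\mathcal B'$, excising only a central sub-slab of relative width $\lambda(N_k)\to 0$.

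\emph{Excluding bad vectors.} Next define
\[ E_k = \bigcup_{\substack{\vec y \in \Z^n \setminus \{\vec 0\} \\ M_{k-1} < |\vec y|_\infty \leq M_k}} \bigl\{\pmb\alpha : \|\langle\vec y, \pmb\alpha\rangle\| < \lambda(|\vec y|_\infty)\psi(|\vec y|_\infty)/|\vec y|_\infty^n\bigr\}. \]
Because $M_{k-1}\rho_k \geq 1$, each such $\vec y$ cuts $\mathcal B'$ into many parallel slabs of relative width $\asymp \lambda(|\vec y|_\infty)\psi(|\vec y|_\infty)/|\vec y|_\infty^n$, and a routine slab-density estimate gives
\[ \vol(E_k \cap \mathcal B') \leq C(n)\,\vol(\mathcal B') \sum_{M_{k-1} < 2^\ell \leq M_k} \lambda(2^\ell)\psi(2^\ell). \]
The hypothesis $\int\psi(x)/x\,dx < \infty$ is equivalent to $\sum_\ell \psi(2^\ell) < \infty$, and combined with $\lambda(2^\ell)\to 0$ it allows choosing $M_{k-1}$ large enough that the right side is $\leq \tfrac12\vol(\mathcal B')$. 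Hence $\vol(\mathcal B' \setminus E_k) \geq \tfrac12\vol(\mathcal B')$, and I extract two disjoint sub-balls $\mathcal B_{\sigma 0}, \mathcal B_{\sigma 1} \subset \mathcal B' \setminus E_k$ of radius $r_k \ll \rho_k$.

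\emph{Verification and obstacle.} Every $\pmb\alpha$ on an infinite branch then satisfies $\|\langle \vec x_k, \pmb\alpha\rangle\| \leq \psi(|\vec x_k|_\infty)/|\vec x_k|_\infty^n$ for all $k$, yielding infinitely many solutions of the upper inequality, while every $\vec y \in \Z^n \setminus \{\vec 0\}$ with $|\vec y|_\infty > M_0$ lies in a unique range $(M_{k-1}, M_k]$ and was excluded at stage $k$, so the lower inequality holds for all sufficiently large $\vec y$. The main obstacle will be orchestrating the schedule to simultaneously respect $N_k \gg 1/r_{k-1}$ (needed so that the upper-bound slabs meet $\mathcal B_\sigma$) and $M_{k-1}\rho_k \geq 1$ (needed to stay in the many-slab regime, since a single short vector could otherwise cover $\mathcal B'$ by one very wide bad slab); this is achievable by taking $r_{k-1}$ essentially as large as $\rho_{k-1}$ and letting both $\{M_k\}$ and $\{N_k\}$ grow super-polynomially. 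The slab-density estimate is the one place where the integral convergence is used, and it is also the reason why no monotonicity condition beyond $\psi$ being decreasing is needed here — which is precisely the improvement this theorem records over Jarnik's general result.
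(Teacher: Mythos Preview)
The paper does not actually prove this theorem; it simply records it as the linear-forms specialisation of Jarnik's general result, together with the remark that the extra monotonicity hypotheses there can be dropped. So there is no proof in the paper to compare against, and your Cantor-tree construction is indeed the standard route to such statements. The architecture --- force a $\varphi$-approximation, then excise the bad set via a slab-density estimate controlled by $\sum_\ell \psi(2^\ell)<\infty$ --- is correct in spirit.

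However, the scheduling you propose is internally inconsistent. You require $M_{k-1}\rho_k\geq 1$ for every $k$; in particular $M_{k-2}\geq 1/\rho_{k-1}$. But then, at step $k-1$, you excise from $\mathcal B'$ (of radius $\rho_{k-1}$) the bad slabs for all $\vec y$ with $M_{k-2}<|\vec y|_\infty\leq M_{k-1}$, and each such $\vec y$ already satisfies $|\vec y|\geq M_{k-2}\geq 1/\rho_{k-1}$, so its bad-slab system has spacing $\leq\rho_{k-1}$. Hence \emph{no} ball of radius comparable to $\rho_{k-1}$ can survive in $\mathcal B'\setminus E_{k-1}$. A fattening/union-bound count gives at best $r_{k-1}\lesssim M_{k-1}^{-(n+1)}$, which forces $N_k\gtrsim M_{k-1}^{n+1}$ and then
\[
M_{k-1}\rho_k\ \lesssim\ \psi(N_k)\,M_{k-1}^{\,1-(n+1)^2}\ \longrightarrow\ 0.
\]
So the claimed resolution ``take $r_{k-1}$ essentially as large as $\rho_{k-1}$'' is incompatible with $M_{k-1}\rho_k\geq 1$, and with the order ``force, then exclude inside $\mathcal B'$\,'' the schedule cannot close up.

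The usual remedy is to perform the exclusion at the \emph{coarse} scale: remove $E_k$ from the big ball $\mathcal B_\sigma$ (where one only needs $M_{k-1}r_{k-1}\gtrsim 1$, a far milder constraint), and then use a Fubini/averaging argument over the many parallel good slabs of $\vec x_k$ inside $\mathcal B_\sigma$ to locate one whose intersection with $E_k$ is still a small fraction, so that a further sub-ball can be extracted there. This two-scale handling --- short bad vectors eliminated before one enters the approximation slab, long bad vectors eliminated inside it --- is precisely what makes the recursion consistent, and it is the step at which your sketch is currently incomplete.
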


  We now formulate the main result of this paper. For simplicity we restrict ourselves to the
  case $n=2$ and use the Euclidean norm, though we believe that a similar result should be valid
  for systems of linear forms and for arbitrary norms.

  \begin{theorem} \label{t:intro_main}
    There are explicit positive constants $A$, $B$ with the following property. Given an arbitrary
    non-increasing function $\psi:\R_+\to\R_+$, $\psi(1)\leq A$, one can find an uncountable
    set of $\pmb\alpha\in\R^2$, such that for all $\vec x\in\Z^2\setminus\{\vec 0\}$
    \[ \|\langle\pmb\alpha,\vec x\rangle\|\geq
       \frac{\psi(|\vec x|)}{|\vec x|^2}(1-B\psi(|\vec x|)) \]
    but the inequality
    \[ \|\langle\pmb\alpha,\vec x\rangle\|\leq
       \frac{\psi(|\vec x|)}{|\vec x|^2}(1+B\psi(|\vec x|)) \]
    has infinitely many solutions in $\vec x\in\Z^2$.
  \end{theorem}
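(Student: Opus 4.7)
The plan is a branching Cantor-like construction of $\pmb\alpha\in\R^2$. I would build a nested sequence of compact convex regions $K_0\supset K_1\supset K_2\supset\cdots$ with $\diam K_k\to 0$, splitting each $K_k$ into at least two disjoint candidates for $K_{k+1}$, so as to produce $2^{\aleph_0}$ distinct $\pmb\alpha\in\bigcap_k K_k$. Alongside $K_k$ I would carry primitive vectors $\vec x_1,\dots,\vec x_k\in\Z^2$ of strictly increasing norm and integers $p_1,\dots,p_k$, and maintain on $K_k$ the invariants
\[
|\langle\pmb\alpha,\vec x_j\rangle-p_j|\leq \tfrac{\psi(|\vec x_j|)}{|\vec x_j|^2}\bigl(1+B\psi(|\vec x_j|)\bigr)\quad\text{for }j\leq k,
\]
(which in the limit give the ``infinitely many solutions'' clause) and
\[
\|\langle\pmb\alpha,\vec x\rangle\|\geq \tfrac{\psi(|\vec x|)}{|\vec x|^2}\bigl(1-B\psi(|\vec x|)\bigr)
\]
for every primitive $\vec x\in\Z^2\setminus\{\pm\vec x_1,\dots,\pm\vec x_k\}$ with $|\vec x|\leq|\vec x_k|$. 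Finitely many ``very small'' $\vec x$ not covered by this scheme are handled once and for all by taking $K_0$ generically; this is possible because $\psi(1)\leq A$ makes the corresponding finitely many forbidden bands thin.

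The inductive passage from $K_k$ to $K_{k+1}$ is the geometric heart. The region $K_k$ is essentially a thin parallelogram lying along the line $L(\vec x_k,p_k):=\{\pmb\alpha:\langle\pmb\alpha,\vec x_k\rangle=p_k\}$. First I would select a primitive $\vec x_{k+1}\in\Z^2$ of norm strictly greater than $|\vec x_k|$, together with an integer $p_{k+1}$, so that $L(\vec x_{k+1},p_{k+1})$ crosses $K_k$ transversally; such pairs exist in abundance by the density of primitive lattice points. Then inside the strip of half-thickness $\tfrac{\psi(|\vec x_{k+1}|)}{|\vec x_{k+1}|^3}(1+B\psi(|\vec x_{k+1}|))$ around $L(\vec x_{k+1},p_{k+1})$, intersected with $K_k$, I would place $K_{k+1}$ so as to avoid the forbidden bands of every competing primitive $\vec x\neq\pm\vec x_{k+1}$ of norm at most $|\vec x_{k+1}|$, retaining at least two disjoint choices for branching.

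The main obstacle is a quantitative covering estimate showing that such a $K_{k+1}$ exists. Each competing primitive $\vec x$ contributes a forbidden band around $L(\vec x,p)$ of half-thickness $\tfrac{\psi(|\vec x|)}{|\vec x|^3}(1-B\psi(|\vec x|))$; on the transversal line $L(\vec x_{k+1},p_{k+1})$ this band cuts a segment of length at most the band-width divided by the transversality angle, and the angle is bounded below by $|\det(\vec x,\vec x_{k+1})|/(|\vec x|\,|\vec x_{k+1}|)\geq 1/(|\vec x|\,|\vec x_{k+1}|)$ via the $|\det|\geq 1$ rule for distinct primitive pairs in $\Z^2$. Unlike Theorem 3, no convergence of $\int\psi(x)/x\,dx$ is assumed, so a naive sum over primitive $\vec x$ would lose a logarithmic factor; overcoming this requires choosing $\vec x_{k+1}$ judiciously, so that $|\det(\vec x_j,\vec x_{k+1})|$ is controlled for each previous index $j$, and exploiting the fact that the forbidden sub-segments from any single primitive $\vec x$ form a near-arithmetic progression with spacing proportional to $|\vec x_{k+1}|/|\det(\vec x,\vec x_{k+1})|$ on $L(\vec x_{k+1},p_{k+1})$. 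Summed over all $\vec x$ of norm $\leq|\vec x_{k+1}|$ whose band meets $K_k$, the total forbidden length then comes out at most a constant multiple of $A$ times the length of $L(\vec x_{k+1},p_{k+1})\cap K_k$; for $A$ small this is well under one half, and the complement splits into at least two disjoint arcs along which $K_{k+1}$ can be placed. All other ingredients—continuity of the invariants, inheritance across steps, and passage to the limit—are then routine.
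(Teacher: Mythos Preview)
Your outline follows the classical Jarn\'ik covering strategy, and you correctly identify the obstacle: without the convergence of $\int\psi(x)/x\,dx$, the naive sum over competing primitive $\vec x$ diverges logarithmically. But the two fixes you propose---``choosing $\vec x_{k+1}$ judiciously so that $|\det(\vec x_j,\vec x_{k+1})|$ is controlled'' and ``exploiting the near-arithmetic spacing of the bands''---do not close this gap, and the key sentence ``the total forbidden length then comes out at most a constant multiple of $A$ times the length of $L(\vec x_{k+1},p_{k+1})\cap K_k$'' is asserted rather than proved. Even if each primitive $\vec x$ contributes only a bounded number of forbidden sub-segments to $L\cap K_k$, the length of one such sub-segment is of order $\psi(|\vec x|)\,|\vec x_{k+1}|/(|\vec x|^2|\det(\vec x,\vec x_{k+1})|)$, and summing this over the $\asymp|\vec x_{k+1}|^2$ primitive $\vec x$ with $|\vec x_k|<|\vec x|\leq|\vec x_{k+1}|$ reproduces exactly the divergent integral $\int\psi(r)/r\,dr$ you were trying to avoid. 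Controlling $|\det(\vec x_j,\vec x_{k+1})|$ for the finitely many \emph{previous} indices $j$ does nothing about the bulk of the competitors with $|\vec x|$ close to $|\vec x_{k+1}|$. In short, this is precisely the barrier that separates Theorem~3 from Theorem~4, and a covering argument of this shape cannot cross it.

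The paper bypasses the covering problem entirely. Instead of excluding the bad bands of all competing $\vec x$, it builds $\pmb\alpha$ so that a prescribed sequence $\vec m_1,\vec m_2,\ldots$ is exactly the full sequence of \emph{best approximations} for $\langle\pmb\alpha,\cdot\rangle$, with $\|\langle\pmb\alpha,\vec m_k\rangle\|\cdot|\vec m_k|^2$ pinned to $\psi(|\vec m_k|)$ up to $O(\psi^2)$ (Theorem~5). The lower bound for an arbitrary $\vec x$ then costs nothing: if $|\vec m_k|\leq|\vec x|<|\vec m_{k+1}|$ then $\|\langle\pmb\alpha,\vec x\rangle\|\geq\|\langle\pmb\alpha,\vec m_k\rangle\|$ by the very definition of best approximation, and monotonicity of $\psi$ and of $t\mapsto t(1-4\gamma t)$ finishes. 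The inductive engine (Lemma~3) is a ``basis change'' step: given $\vec m_k,\vec m_{k+1}$ one picks $\vec m_{k+2}$ in a specific coset of $\spanned_\Z(\vec m_k,\vec m_{k+1})$ so that $|\det(\vec m_{k+1},\vec m_{k+2})|/|\vec m_k|^2$ and $|\vec m_{k+2}|/|\vec m_{k+1}|$ land in prescribed windows governed by $\psi_k,\psi_{k+1}$, and a half-ball $\Omega_{k+1}\subset\Omega_k$ of radius $(2|\vec m_{k+2}||\det(\vec m_{k+1},\vec m_{k+2})|)^{-1}$ is placed so that $\vec m_k$ and $\vec m_{k+1}$ are forced to be consecutive best approximations for every $\pmb\alpha\in\Omega_{k+1}$. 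No sum over lattice points ever appears; the only ``avoidance'' needed is handled pointwise by Corollary~1 (a divisibility statement for distances to integer and half-integer level lines), which is what makes the argument go through without any integrability hypothesis on $\psi$.
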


  \section{Best approximations}

  \begin{definition}
    A point $\vec m\in\Z^2\backslash\{\vec 0\}$ is said to be a \emph{best
    approximation} for $\langle\pmb\alpha,\cdot\rangle$ if
    \[ \|\langle\pmb\alpha,\vec m\rangle\|<\|\langle\pmb\alpha,\vec m'\rangle\| \]
    for every $\vec m'\in\Z^2\backslash\{\vec 0\}$, such that $|\vec m'|<|\vec m|$, and
    \[ \|\langle\pmb\alpha,\vec m\rangle\|\leq\|\langle\pmb\alpha,\vec m'\rangle\| \]
    for every $\vec m'\in\Z^2\backslash\{\pm\vec m\}$, such that $|\vec m'|=|\vec m|$.
  \end{definition}

  The set of all the best approximations for $\langle\pmb\alpha,\cdot\rangle$ is infinite if and
  only if the coordinates of $\pmb\alpha$ are linearly independent with the unit over $\Q$. If this
  is the case, then for each possible absolute value there are exactly two best approximations, on
  which this absolute value is attained, and they differ only in the sign. Thus, we can order the
  set of all the best approximations for $\langle\pmb\alpha,\cdot\rangle$ with respect to the
  absolute value and obtain a sequence $\{\pm\vec m_k\}_{k=1}^\infty$.

  Set
  \begin{equation} \label{eq:gamma_0}
    \gamma=\frac{18}{9-\sqrt2}\approx2.373\,.
  \end{equation}

  Theorem \ref{t:intro_main} is a corollary of the following, more precise, theorem, which is the
  main result of the paper. In this theorem the \emph{whole} sequence of best approximations is
  concerned and \emph{all} of them are required to be of a given order.

  \begin{theorem} \label{t:main}
    Given an arbitrary non-increasing function $\psi:\R_+\to\R_+$, $\psi(1)\leq(9\gamma)^{-1}$,
    there is an $\pmb\alpha\in\R^2$, such that all the best approximations $\vec m_k$ for
    $\langle\pmb\alpha,\cdot\rangle$ satisfy the condition
    \begin{equation} \label{eq:theorem_inequality_for_all_the_BA}
      \psi(|\vec m_k|)-4\gamma\psi(|\vec m_k|)^2<
      \|\langle\pmb\alpha,\vec m_k\rangle\|\cdot|\vec m_k|^2\leq
      \psi(|\vec m_k|)+\gamma\psi(|\vec m_k|)^2.
    \end{equation}
    Moreover, there is a continuum of such $\pmb\alpha$.
  \end{theorem}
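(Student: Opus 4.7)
The plan is to construct $\pmb\alpha$ together with its sequence of best approximations $\vec m_1,\vec m_2,\ldots$ by a nested-set induction in $\R^2$. At the $k$-th stage I will have chosen integer vectors $\vec m_1,\ldots,\vec m_k\in\Z^2$ (the candidate best approximations) and a non-empty closed region $\Omega_k\subset\R^2$ with the property that for every $\pmb\alpha\in\Omega_k$: (a) the vectors $\pm\vec m_1,\ldots,\pm\vec m_k$ are exactly the first $2k$ best approximations of $\langle\pmb\alpha,\cdot\rangle$; (b) the inequality \eqref{eq:theorem_inequality_for_all_the_BA} holds for $j=1,\ldots,k$. Geometrically $\Omega_k$ will be a thin parallelogram cut out by the conditions that each $\langle\pmb\alpha,\vec m_j\rangle$ lies in a prescribed short interval modulo $\Z$. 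The desired $\pmb\alpha$ will then be any point in $\bigcap_{k}\Omega_k$, and to produce a continuum of $\pmb\alpha$'s I would at each stage ensure that $\Omega_{k+1}$ can be chosen inside either of two disjoint admissible sub-parallelograms of $\Omega_k$, yielding a Cantor-type family.

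The inductive step is the heart of the matter. Given $(\vec m_1,\ldots,\vec m_k,\Omega_k)$, I look for $\vec m_{k+1}\in\Z^2$ together with a shrunken region $\Omega_{k+1}\subset\Omega_k$ such that (a) and (b) continue to hold with $k+1$ in place of $k$. The natural candidates for $\vec m_{k+1}$ are lattice vectors of norm somewhat larger than $|\vec m_k|$ lying in a narrow angular sector transversal to $\vec m_k$; on such vectors the linear form $\langle\pmb\alpha,\vec m_{k+1}\rangle$ varies rapidly with $\pmb\alpha\in\Omega_k$, and a counting argument in $\Z^2$ should produce one whose value $\|\langle\pmb\alpha,\vec m_{k+1}\rangle\|$ can be driven into the prescribed window $[\psi(|\vec m_{k+1}|)-4\gamma\psi^2,\ \psi(|\vec m_{k+1}|)+\gamma\psi^2]/|\vec m_{k+1}|^2$ by restricting $\pmb\alpha$ to an appropriate sub-parallelogram $\Omega_{k+1}$.

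The verification of (a) for $k+1$ requires showing that no other integer vector $\vec m'$ with $|\vec m_k|<|\vec m'|<|\vec m_{k+1}|$ produces a smaller $\|\langle\pmb\alpha,\vec m'\rangle\|$ than $\vec m_k$ does. Writing $\vec m'$ in the basis $(\vec m_k,\vec m_{k+1})$ and combining the two-sided bounds (b) already established for $\vec m_k$ with the dimensions of $\Omega_k$ gives a lower bound on $\|\langle\pmb\alpha,\vec m'\rangle\|$. Balancing this lower bound against the prescribed upper bound on $\|\langle\pmb\alpha,\vec m_{k+1}\rangle\|$ in the worst case $|\vec m'|$ just below $|\vec m_{k+1}|$ is what pins down the explicit constant $\gamma=18/(9-\sqrt2)$.

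The main obstacle is therefore an extremal lattice-geometric lemma that simultaneously controls (i) the existence of a suitable $\vec m_{k+1}$ of prescribed alignment and norm ratio $|\vec m_{k+1}|/|\vec m_k|$ (neither too large, so that the upper bound in (b) remains tight, nor too small, so that $\Omega_{k+1}$ is nonempty), and (ii) the impossibility of any intermediate $\vec m'$ undercutting the ordering of best approximations. The hypothesis $\psi(1)\leq(9\gamma)^{-1}$ should be exactly what is needed to make all these estimates propagate consistently through every stage of the induction; once the lemma is established, the Cantor bifurcation produces an uncountable family of admissible $\pmb\alpha$, and Theorem \ref{t:intro_main} follows from Theorem \ref{t:main} because any $\vec x\in\Z^2\setminus\{\vec 0\}$ either is, or is bounded below by (a constant multiple of) some best approximation in the sense required.
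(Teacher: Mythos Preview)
Your overall architecture---nested regions $\Omega_k$, an inductively built sequence $\vec m_1,\vec m_2,\ldots$ that becomes the best-approximation sequence, and a Cantor bifurcation for the continuum---is exactly the paper's strategy. Two points in your plan, however, are genuine gaps rather than details to be filled in.

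First, your mechanism for excluding intermediate competitors $\vec m'$ with $|\vec m_k|<|\vec m'|<|\vec m_{k+1}|$ is to write $\vec m'$ in the basis $(\vec m_k,\vec m_{k+1})$ and combine the bounds on $\langle\pmb\alpha,\vec m_k\rangle$, $\langle\pmb\alpha,\vec m_{k+1}\rangle$. But in the paper's construction $|\det(\vec m_k,\vec m_{k+1})|$ is of order $\gamma|\vec m_k|^2\gg1$, so almost every integer $\vec m'$ lies \emph{outside} $\spanned_\Z(\vec m_k,\vec m_{k+1})$ and has non-integer coordinates in that basis; the triangle inequality for $\|\cdot\|$ then gives nothing useful. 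The paper circumvents this by centering $\Omega_k$ at a point $\pmb\alpha_k$ where $\langle\pmb\alpha_k,\vec m_k\rangle,\langle\pmb\alpha_k,\vec m_{k+1}\rangle\in\Z$ (so $\{\vec x:\langle\pmb\alpha_k,\vec x\rangle\in\Z\}=\spanned_\Z(\vec m_k,\vec m_{k+1})$), which forces every off-sublattice $\vec m'$ to satisfy $\|\langle\pmb\alpha_k,\vec m'\rangle\|\geq|\det(\vec m_k,\vec m_{k+1})|^{-1}$; a distance-divisibility lemma then propagates this over the half-ball of radius $R_k=(2|\vec m_{k+1}||\det(\vec m_k,\vec m_{k+1})|)^{-1}$. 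Maintaining this integrality of the centers through the induction requires a separate ``basis change'' lemma, and is not available if $\Omega_k$ is merely the parallelogram cut out by the windows in (b).

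Second, your diagnosis of where $\gamma=18/(9-\sqrt2)$ comes from is off. It is not produced by balancing competitor bounds; it is exactly the threshold that makes the nesting $\Omega_{k+1}\subset\Omega_k$ go through. In the paper one needs $|\pmb\alpha_{k+1}-\pmb\alpha_k|+R_{k+1}<R_k$, which reduces to $\tfrac{2}{\gamma}+3\gamma\psi_k\sqrt{2\gamma\psi_{k+1}}<1$; plugging in the worst case $\psi_k=\psi_{k+1}=(9\gamma)^{-1}$ gives $\tfrac{2}{\gamma}+\tfrac{\sqrt2}{9}\leq1$, i.e.\ $\gamma\geq18/(9-\sqrt2)$. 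So the constant governs the geometry of the nested half-balls, and your ``extremal lattice-geometric lemma'' as stated would not recover it.
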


  We note that the technique used here to prove Theorem \ref{t:main} is similar to the technique
  developed in \cite{german_asymptotic_directions}.

  \section{Proof of Theorem \ref{t:main}}

  \subsection{Description of the set of forms having a given point $\vec m$ as a best approximation}

  Given a primitive point $\vec m\in\Z^2$, the set of all $\pmb\alpha\in\R^2$, such that $\vec m$
  is a best approximation for $\langle\pmb\alpha,\cdot\rangle$, is contained in the set
  \begin{equation} \label{eq:contained_in}
    \mathfrak S=
    \bigcap_{\substack{\vec m'\in\Z^2\backslash\{\vec 0,\pm\vec m\} \\ |\vec m'|\leq|\vec m|}}
    \Big\{ \vec x\in\R^2\, \Big|\, \|\langle\vec m,\vec x\rangle\|\leq\|\langle\vec m',\vec x\rangle\| \Big\}
  \end{equation}
  and contains its interior. One can easily see that each of the sets in the intersection
  \eqref{eq:contained_in} is simply a union of parallelograms. Besides that, one of the two
  diagonals of each of these parallelograms lies on an integer level of the form $\langle\vec m,\cdot\rangle$, and
  the union of such diagonals coincides with the union of all the integer levels of $\langle\vec m,\cdot\rangle$.
  Thus, all the connected components of $\interior\mathfrak S$ are open convex polygons. None of
  these polygons can be too small. To see this we shall use the following

  \begin{lemma} \label{l:distance_divisibility}
    Let $\vec a,\vec b,\vec c\in\Z^2$ and let $\vec b$ and $\vec c$ be linearly independent. Let
    also $\pmb\alpha\in\R^2$, $\langle\vec b,\pmb\alpha\rangle\in\Z$,
    $\langle\vec c,\pmb\alpha\rangle\in\Z$. Then for every $\lambda\in\Z$ the (Euclidean) distance
    from $\pmb\alpha$ to the line defined by the equation $\langle\vec a,\vec x\rangle=\lambda$
    is an integer multiple of
    \[ \frac{1}{|\vec a||\det(\vec b,\vec c)|}\,. \]
  \end{lemma}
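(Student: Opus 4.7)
The plan is to translate the geometric statement into an arithmetic one and then use Cramer's rule. The Euclidean distance from $\pmb\alpha$ to the line $\{\vec x\in\R^2 : \langle\vec a,\vec x\rangle=\lambda\}$ is, by the standard point-to-line formula,
\[
\frac{|\langle\vec a,\pmb\alpha\rangle-\lambda|}{|\vec a|},
\]
so the claim reduces to showing that $\langle\vec a,\pmb\alpha\rangle-\lambda$ is an integer multiple of $1/|\det(\vec b,\vec c)|$. Since $\lambda\in\Z$, this is equivalent to showing that $\langle\vec a,\pmb\alpha\rangle\in\frac{1}{|\det(\vec b,\vec c)|}\Z$.

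First I would use the fact that $\vec b,\vec c$ are linearly independent in $\R^2$ to write $\vec a=s\vec b+t\vec c$ for some $s,t\in\R$. Cramer's rule gives
\[
s=\frac{\det(\vec a,\vec c)}{\det(\vec b,\vec c)},\qquad t=\frac{\det(\vec b,\vec a)}{\det(\vec b,\vec c)},
\]
and since $\vec a,\vec b,\vec c\in\Z^2$, both numerators $\det(\vec a,\vec c)$ and $\det(\vec b,\vec a)$ are integers.

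Next I would compute, using the hypothesis $\langle\vec b,\pmb\alpha\rangle,\langle\vec c,\pmb\alpha\rangle\in\Z$,
\[
\det(\vec b,\vec c)\,\langle\vec a,\pmb\alpha\rangle
=\det(\vec a,\vec c)\,\langle\vec b,\pmb\alpha\rangle+\det(\vec b,\vec a)\,\langle\vec c,\pmb\alpha\rangle\in\Z.
\]
This shows $\langle\vec a,\pmb\alpha\rangle\in\frac{1}{|\det(\vec b,\vec c)|}\Z$, and so $\langle\vec a,\pmb\alpha\rangle-\lambda$ is also an integer multiple of $1/|\det(\vec b,\vec c)|$. Dividing by $|\vec a|$ completes the proof.

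There is no serious obstacle: the lemma is a clean piece of linear algebra, the only subtlety being to recognize that the integrality of $\langle\vec b,\pmb\alpha\rangle$ and $\langle\vec c,\pmb\alpha\rangle$ transfers to $\langle\vec a,\pmb\alpha\rangle$ only up to the denominator $\det(\vec b,\vec c)$ that arises from inverting the change-of-basis matrix $(\vec b,\vec c)$.
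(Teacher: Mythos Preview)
Your proof is correct and follows essentially the same route as the paper: both reduce the statement to showing $\langle\vec a,\pmb\alpha\rangle\in\frac{1}{|\det(\vec b,\vec c)|}\Z$ and then invoke the point-to-line distance formula. The only cosmetic difference is that the paper phrases the key step in dual-lattice language (observing that $\det(\vec b,\vec c)\,\pmb\alpha\in\Z^2$ and then pairing with $\vec a$), whereas you expand $\vec a$ in the basis $\vec b,\vec c$ via Cramer's rule and pair with $\pmb\alpha$; these are dual versions of the same computation.
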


  \begin{proof}
    The index of $\Z^2$ as a sublattice of the lattice, dual to $\spanned_\Z(\vec b,\vec c)$,
    is equal to $|\det(\vec b,\vec c)|$. Hence $\det(\vec b,\vec c)\pmb\alpha\in\Z^2$, and thus,
    \[ \langle\vec a,\pmb\alpha\rangle\in\frac{\Z}{|\det(\vec b,\vec c)|}\,. \]
    It remains to notice that the Euclidean distance between two adjacent integer levels of the
    form $\langle\vec a,\vec x\rangle$ equals $1/|\vec a|$.
  \end{proof}

  Since not all the linear forms determining the boundary of a connected component of
  $\interior\mathfrak S$ are necessarily integer, but some of them may have half--integer
  coefficients, the fact that none of those components can be too small is implied by the following
  obvious corollary to Lemma \ref{l:distance_divisibility}:

  \begin{corollary} \label{cor:distance_divisibility}
    Let $\vec a\in\frac{1}{2}\Z^2$, $\vec b,\vec c\in\Z^2$ and let $\vec b$ and $\vec c$ be
    linearly independent. Let also $\pmb\alpha\in\R^2$, $\langle\vec b,\pmb\alpha\rangle\in\Z$,
    $\langle\vec c,\pmb\alpha\rangle\in\Z$. Then for every $\lambda\in\frac{1}{2}\Z$ the (Euclidean)
    distance from $\pmb\alpha$ to the line defined by the equation
    $\langle\vec a,\vec x\rangle=\lambda$ is an integer multiple of
    \[ \frac{1}{2|\vec a||\det(\vec b,\vec c)|}\,. \]
  \end{corollary}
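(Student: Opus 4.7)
The plan is to reduce Corollary \ref{cor:distance_divisibility} to Lemma \ref{l:distance_divisibility} by a straightforward rescaling. Set $\vec a' = 2\vec a$ and $\lambda' = 2\lambda$. Since $\vec a\in\frac{1}{2}\Z^2$ and $\lambda\in\frac{1}{2}\Z$, we have $\vec a'\in\Z^2$ and $\lambda'\in\Z$, which puts us back in the integral setting of the lemma.

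Next, observe that the line defined by the equation $\langle\vec a,\vec x\rangle=\lambda$ coincides, as a subset of $\R^2$, with the line defined by $\langle\vec a',\vec x\rangle=\lambda'$, because the two equations differ only by the nonzero factor $2$. Consequently the Euclidean distance from $\pmb\alpha$ to this line is unaffected by whether we describe it via $(\vec a,\lambda)$ or via $(\vec a',\lambda')$.

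All hypotheses of Lemma \ref{l:distance_divisibility} now hold for $\vec a'$, $\vec b$, $\vec c$, $\pmb\alpha$, $\lambda'$: the vectors $\vec a',\vec b,\vec c$ lie in $\Z^2$, the pair $\vec b,\vec c$ is linearly independent, $\langle\vec b,\pmb\alpha\rangle$ and $\langle\vec c,\pmb\alpha\rangle$ belong to $\Z$, and $\lambda'\in\Z$. Applying the lemma therefore shows that the distance in question is an integer multiple of
\[ \frac{1}{|\vec a'|\,|\det(\vec b,\vec c)|}=\frac{1}{2|\vec a|\,|\det(\vec b,\vec c)|}, \]
which is exactly the assertion of the corollary.

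There is essentially no obstacle: the only thing to check is the invariance of the line (and hence of the point-to-line distance) under the simultaneous doubling of its coefficient vector and of its right-hand side, and this is immediate. The corollary is genuinely a one-line consequence of the lemma, extracted separately only because, in the intended application to the boundary of a connected component of $\interior\mathfrak S$, some of the bounding forms carry half-integer coefficients.
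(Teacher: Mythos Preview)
Your proof is correct and is precisely the ``obvious'' reduction the paper has in mind: the paper gives no explicit argument for the corollary, simply calling it an obvious consequence of Lemma~\ref{l:distance_divisibility}, and your rescaling $\vec a'=2\vec a$, $\lambda'=2\lambda$ is exactly the one-line derivation that justifies this.
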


  \subsection{Basis change}

  The following statement supports one of the crucial steps in the proof of Theorem \ref{t:main}.

  \begin{lemma} \label{l:triple_switch}
    Suppose $\vec a,\vec b,\vec c\in\Z^2$ and $\pmb\alpha,\pmb\beta\in\R^2$ satisfy the relations
    $\langle\pmb\alpha,\vec b\rangle\in\Z$,
    $\langle\pmb\alpha,\vec c\rangle=\langle\pmb\beta,\vec c\rangle\in\Z$, and
    $\langle\pmb\beta,\vec a\rangle$ equals the nearest integer to
    $\langle\pmb\alpha,\vec a\rangle$ (in case $\langle\pmb\alpha,\vec a\rangle=1/2$ one can take
    any of the two nearest integers). Suppose also that $\vec b,\vec c$ are linearly independent,
    $\vec a,\vec c$ are linearly independent,
    \begin{equation} \label{eq:projection_of_supplementability}
      \big\{ \vec x\in\Z^2\, \big|\, \langle\pmb\alpha,\vec x\rangle\in\Z \big\}=
      \spanned_\Z(\vec b,\vec c)
    \end{equation}
    and
    \begin{equation} \label{eq:a_is_the_closest}
      \|\langle\pmb\alpha,\vec a\rangle\|=|\det(\vec b,\vec c)|^{-1}.
    \end{equation}
    Then
    \begin{equation} \label{eq:projection_of_supplementability_switched}
      \big\{ \vec x\in\Z^2\, \big|\, \langle\pmb\beta,\vec x\rangle\in\Z \big\}=
      \spanned_\Z(\vec a,\vec c)
    \end{equation}
    and
    \begin{equation} \label{eq:b_is_the_closest}
      \|\langle\pmb\beta,\vec b\rangle\|=|\det(\vec a,\vec c)|^{-1}.
    \end{equation}
  \end{lemma}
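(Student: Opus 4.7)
The plan is to track $\pmb\beta-\pmb\alpha$, a vector entirely pinned down by the hypotheses. Setting $\delta:=\langle\pmb\alpha,\vec a\rangle-\langle\pmb\beta,\vec a\rangle$ we have $|\delta|=\|\langle\pmb\alpha,\vec a\rangle\|=|\det(\vec b,\vec c)|^{-1}$ by \eqref{eq:a_is_the_closest}, while $\langle\pmb\beta-\pmb\alpha,\vec c\rangle=0$ and $\langle\pmb\beta-\pmb\alpha,\vec a\rangle=-\delta$. Since $\vec a,\vec c$ are linearly independent, these two scalar equations determine $\pmb\beta-\pmb\alpha$ uniquely, so for every $\vec x\in\R^2$ one has $\langle\pmb\beta,\vec x\rangle=\langle\pmb\alpha,\vec x\rangle-\delta\,s(\vec x)$, where $s(\vec x)$ denotes the $\vec a$-coordinate of $\vec x$ in the basis $(\vec a,\vec c)$ of $\R^2$.

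A structural preliminary: \eqref{eq:projection_of_supplementability} forces $\Z^2/\spanned_\Z(\vec b,\vec c)$ to be \emph{cyclic} of order $N:=|\det(\vec b,\vec c)|$, since $\vec x\mapsto\langle\pmb\alpha,\vec x\rangle\bmod 1$ embeds this quotient into $\R/\Z$ and finite subgroups of $\R/\Z$ are cyclic. Hypothesis \eqref{eq:a_is_the_closest} additionally tells us that $\vec a+\spanned_\Z(\vec b,\vec c)$ generates this cyclic quotient, so $\vec a,\vec b,\vec c$ span $\Z^2$ over $\Z$. Expanding $\vec b=s\vec a+t\vec c$, Cramer's rule gives $s=\det(\vec b,\vec c)/\det(\vec a,\vec c)$, hence
\[
  \langle\pmb\beta,\vec b\rangle \;=\; \langle\pmb\alpha,\vec b\rangle-\delta s
     \;\equiv\; -\delta\,\frac{\det(\vec b,\vec c)}{\det(\vec a,\vec c)}
     \;\equiv\; \pm\frac{1}{|\det(\vec a,\vec c)|}\pmod 1,
\]
using $\langle\pmb\alpha,\vec b\rangle\in\Z$ and $|\delta|=1/N$. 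This yields \eqref{eq:b_is_the_closest}.

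For \eqref{eq:projection_of_supplementability_switched} one inclusion is immediate, as $\vec a$ and $\vec c$ both lie in the kernel of $\vec x\mapsto\langle\pmb\beta,\vec x\rangle\bmod 1$. For the reverse inclusion I would argue via indices: this kernel contains $\spanned_\Z(\vec a,\vec c)$, which has index $|\det(\vec a,\vec c)|$ in $\Z^2$, so the kernel has index at most $|\det(\vec a,\vec c)|$; but the displayed calculation exhibits in the image an element of order $|\det(\vec a,\vec c)|$, namely $\langle\pmb\beta,\vec b\rangle\bmod 1$, forcing the kernel to have index at least $|\det(\vec a,\vec c)|$. Equality of indices then identifies the two lattices.

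The bookkeeping is essentially routine; the one genuine subtlety I anticipate is the borderline case $|\det(\vec a,\vec c)|=1$, in which \eqref{eq:b_is_the_closest} cannot hold literally since distances to $\Z$ are bounded by $1/2$. In the intended iterative application this degenerate case does not arise (the relevant determinants grow along the sequence of basis switches), and the index argument for \eqref{eq:projection_of_supplementability_switched} continues to function regardless.
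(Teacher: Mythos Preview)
Your argument is correct and takes a genuinely different route from the paper. The paper lifts everything to $\Z^3$: it sets $\overline{\vec a}=(a_1,a_2,-\langle\pmb\beta,\vec a\rangle)$, $\overline{\vec b}=(b_1,b_2,-\langle\pmb\alpha,\vec b\rangle)$, $\overline{\vec c}=(c_1,c_2,-\langle\pmb\alpha,\vec c\rangle)$, observes that \eqref{eq:projection_of_supplementability} and \eqref{eq:a_is_the_closest} together mean $\overline{\vec a},\overline{\vec b},\overline{\vec c}$ form a basis of $\Z^3$, and then reads off both conclusions by the symmetry of that statement (swapping the roles of $\overline{\vec a}$ and $\overline{\vec b}$, and of the planes $\pi_\alpha=\spanned(\overline{\vec b},\overline{\vec c})$ and $\pi_\beta=\spanned(\overline{\vec a},\overline{\vec c})$). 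You instead stay in the plane, compute $\pmb\beta-\pmb\alpha$ directly, use the homomorphism $\Z^2\to\R/\Z$ to see the quotient is cyclic with $\vec a$ a generator, and finish \eqref{eq:projection_of_supplementability_switched} by an index count. The paper's three-dimensional picture is more symmetric and geometric (the whole lemma becomes ``if three lifted vectors form a $\Z^3$-basis, then\ldots''), while your approach is more algebraic and self-contained, avoiding the auxiliary embedding. A small bonus of your write-up is that you explicitly isolate the degenerate case $|\det(\vec a,\vec c)|=1$, where \eqref{eq:b_is_the_closest} cannot hold as stated; the paper's proof glosses over this (its ``$\overline{\vec b}$ lies in a plane next to $\pi_\beta$'' tacitly assumes there is more than one layer), and as you note, the application never encounters it.
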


  \begin{proof}
    Let $\vec a=(a_1,a_2)$, $\vec b=(b_1,b_2)$, $\vec c=(c_1,c_2)$ and set
    $a_3=-\langle\pmb\beta,\vec a\rangle$, $b_3=-\langle\pmb\alpha,\vec b\rangle$,
    $c_3=-\langle\pmb\alpha,\vec c\rangle=-\langle\pmb\beta,\vec c\rangle$.

    Let us prove that the points
    \[ \overline{\vec a}=(a_1,a_2,a_3),\quad\overline{\vec b}=(b_1,b_2,b_3),\quad
       \overline{\vec c}=(c_1,c_2,c_3) \]
    form a basis of $\Z^3$.
    It follows from \eqref{eq:projection_of_supplementability} that all the integer points
    contained in the plane $\pi_\alpha$ spanned by $\overline{\vec b}$ and $\overline{\vec c}$
    belong to the lattice
    \[ \spanned_\Z(\overline{\vec b},\overline{\vec c}). \]
    This means that $\overline{\vec b}$ and $\overline{\vec c}$ can be supplemented to a
    basis of $\Z^3$. Hence $\Z^3$ splits into ``layers'' contained in two-dimensional planes
    parallel to $\pi_\alpha$. Moreover, any two neighbouring planes cut a segment in the vertical
    axis of length $|\det(\vec b,\vec c)|^{-1}$. Now, using \eqref{eq:a_is_the_closest} and the
    fact that $a_3$ equals the nearest integer to $-\langle\pmb\alpha,\vec a\rangle$ we see that
    $\overline{\vec a}$ lies in a plane next to $\pi_\alpha$. This shows that
    $\overline{\vec a},\overline{\vec b},\overline{\vec c}$ form a basis of $\Z^3$.

    Thus, all the integer points of the plane $\pi_\beta$ spanned by $\overline{\vec a}$ and
    $\overline{\vec c}$ are in
    \[ \spanned_\Z(\overline{\vec a},\overline{\vec c}), \]
    which immediately implies \eqref{eq:projection_of_supplementability_switched}. As before,
    $\Z^3$ splits into ``layers'' contained in two-dimensional planes parallel to $\pi_\beta$, such
    that any two neighbouring ones cut a segment in the vertical axis of length
    $|\det(\vec a,\vec c)|^{-1}$. Noticing that $\overline{\vec b}$ lies in a plane next to
    $\pi_\beta$ we get \eqref{eq:b_is_the_closest}.
  \end{proof}

  \subsection{Induction lemma}

  To prove theorem \ref{t:main} we shall construct a sequence of
  embedded two-dimensional ``half-balls'' $\{\Omega_k\}_{k=1}^\infty$ with their common point
  $\pmb\alpha$ satisfying the statement of the corresponding theorem. We say that a set $\Omega$ is
  a \emph{half-ball of radius $R$ centered at a point $\vec x$} if $\Omega$ is the intersection of
  a closed Euclidean ball of radius $R$ centered at $\vec x$ and a closed half-plane with the
  supporting line containing $\vec x$.

  The following lemma gives the induction step.

  \begin{lemma} \label{l:induction_step}
    Let $k\in\Z_+$, $\psi_k,\psi_{k+1}\in\R_+$, $\psi_k,\psi_{k+1}\leq(9\gamma)^{-1}$, and let
    $\vec m_k,\vec m_{k+1}\in\Z^2$. Let $\Omega_k\subset\R^2$ be a half-ball of radius
    \[ R_k=(2|\vec m_{k+1}||\det(\vec m_k,\vec m_{k+1})|)^{-1} \]
    centred at $\pmb\alpha_k$ with the line
    \[ \ell_k=\big\{ \vec x\in\R^2\, \big|\,
       \langle\vec x,\vec m_k\rangle=\langle\pmb\alpha_k,\vec m_k\rangle \big\} \]
    supporting it. Suppose that the following conditions are satisfied:

    $1)$ $\langle\vec m_k,\vec m_{k+1}\rangle\leq0$;

    $2)$ $\big\{ \vec x\in\Z^2\, \big|\, \langle\pmb\alpha_k,\vec x\rangle\in\Z \big\}=
    \spanned_\Z(\vec m_k,\vec m_{k+1})$;

    $3)$ for every $\pmb\alpha\in\Omega_k$ and every
    $\vec m\in\Z^2\backslash\spanned_\Z(\vec m_k,\vec m_{k+1})$, such that
    $|\vec m|<|\vec m_{k+1}|$, one has
    \[ \|\langle\pmb\alpha,\vec m_k\rangle\|<\|\langle\pmb\alpha,\vec m\rangle\|; \]

    $4)$ $\gamma<\dfrac{|\det(\vec m_k,\vec m_{k+1})|}{|\vec m_k|^2}<3\gamma$;

    $5)$ $(2\gamma\psi_k)^{-1/2}\leq\dfrac{|\vec m_{k+1}|}{|\vec m_k|}<(\gamma\psi_k)^{-1/2}$.

    Then there is a point $\vec m_{k+2}\in\Z^2$, linearly independent with $\vec m_{k+1}$, and a
    half-ball $\Omega_{k+1}\subset\Omega_k$ of radius
    \[ R_{k+1}=(2|\vec m_{k+2}||\det(\vec m_{k+1},\vec m_{k+2})|)^{-1} \]
    centred at $\pmb\alpha_{k+1}$ with the line
    \[ \ell_{k+1}=\big\{ \vec x\in\R^2\, \big|\,
       \langle\vec x,\vec m_{k+1}\rangle=\langle\pmb\alpha_{k+1},\vec m_{k+1}\rangle \big\} \]
    supporting it which satisfy the following conditions:

    $1)$ $\langle\vec m_{k+1},\vec m_{k+2}\rangle\leq0$;

    $2)$ $\langle\pmb\alpha_{k+1},\vec m_{k+1}\rangle=\langle\pmb\alpha_k,\vec m_{k+1}\rangle$ and
    $\big\{ \vec x\in\Z^2\, \big|\, \langle\pmb\alpha_{k+1},\vec x\rangle\in\Z \big\}=
    \spanned_\Z(\vec m_{k+1},\vec m_{k+2})$;

    $3)$ for every $\pmb\alpha\in\Omega_{k+1}$ and every
    $\vec m\in\Z^2\backslash\spanned_\Z(\vec m_{k+1},\vec m_{k+2})$, such that
    $|\vec m|<|\vec m_{k+2}|$, one has
    \[ \|\langle\pmb\alpha,\vec m_{k+1}\rangle\|<\|\langle\pmb\alpha,\vec m\rangle\|; \]

    $4)$ for every $\pmb\alpha\in\Omega_{k+1}$ and every
    $\vec m\in\Z^2\backslash\{\vec 0,\pm\vec m_k\}$, such that
    $|\vec m|<|\vec m_{k+1}|$, one has
    \[ \|\langle\pmb\alpha,\vec m_k\rangle\|<\|\langle\pmb\alpha,\vec m\rangle\|; \]

    $5)$ $(2\gamma\psi_{k+1})^{-1/2}\leq\dfrac{|\vec m_{k+2}|}{|\vec m_{k+1}|}<
    (\gamma\psi_{k+1})^{-1/2}$.

    $6)$ $\psi_k^{-1}\leq\dfrac{|\det(\vec m_{k+1},\vec m_{k+2})|}{|\vec m_k|^2}<
    \psi_k^{-1}+3\gamma$.
  \end{lemma}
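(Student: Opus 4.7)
The plan is to build the new data $(\vec m_{k+2}, \pmb\alpha_{k+1}, \Omega_{k+1})$ in three stages: first pick $\vec m_{k+2}$ so that conditions~(1), (5), (6) hold and the hypothesis of Lemma~\ref{l:triple_switch} is satisfied; second, invoke that lemma to produce $\pmb\alpha_{k+1}$ and set $\Omega_{k+1}$ to be the prescribed half-ball; third, verify the two best-approximation conditions~(3) and~(4). The constant $\gamma=18/(9-\sqrt 2)$ is tuned precisely to make the key inequalities in the last two stages close.

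For stage one, hypothesis~(2) forces the values of $\langle\pmb\alpha_k,\cdot\rangle$ on $\Z^2$ to lie in $\frac{1}{|\det(\vec m_k,\vec m_{k+1})|}\Z$, so the subset $L^\ast\subset\Z^2$ on which $\|\langle\pmb\alpha_k,\cdot\rangle\|$ equals $|\det(\vec m_k,\vec m_{k+1})|^{-1}$ is a union of two translates of $\spanned_\Z(\vec m_k,\vec m_{k+1})$. I restrict to the half-plane $\{\vec x:\langle\vec x,\vec m_{k+1}\rangle\leq 0\}$ and seek $\vec m_{k+2}\in L^\ast$ with $|\vec m_{k+2}|/|\vec m_{k+1}|$ in the window $[(2\gamma\psi_{k+1})^{-1/2},(\gamma\psi_{k+1})^{-1/2})$. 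Since $\psi_{k+1}\leq(9\gamma)^{-1}$ makes this window broad compared with the spacing of $L^\ast$, many candidates exist; among them $|\det(\vec m_{k+1},\vec m_{k+2})|/|\vec m_k|^2$ runs through an arithmetic progression of common difference $|\det(\vec m_k,\vec m_{k+1})|/|\vec m_k|^2<3\gamma$ (by~(4)), so pigeonhole lands one such value in $[\psi_k^{-1},\psi_k^{-1}+3\gamma)$, yielding~(6). Since $L^\ast\cap\spanned_\Z(\vec m_k,\vec m_{k+1})=\emptyset$, the chosen $\vec m_{k+2}$ is automatically independent of $\vec m_{k+1}$, and the half-plane restriction gives~(1).

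Stage two applies Lemma~\ref{l:triple_switch} with $(\vec a,\vec b,\vec c)=(\vec m_{k+2},\vec m_k,\vec m_{k+1})$, yielding $\pmb\alpha_{k+1}$ that satisfies conditions~(1) and~(2) of the conclusion, together with the key identity $\|\langle\pmb\alpha_{k+1},\vec m_k\rangle\|=|\det(\vec m_{k+1},\vec m_{k+2})|^{-1}$. I take $\Omega_{k+1}$ to be the half-ball of radius $R_{k+1}$ centred at $\pmb\alpha_{k+1}$, supported by $\ell_{k+1}$ on the side containing $\pmb\alpha_k$ (and $\pmb\alpha_k\in\ell_{k+1}$ because $\vec m_{k+1}\in\spanned_\Z(\vec m_k,\vec m_{k+1})$). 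A direct computation gives $|\pmb\alpha_{k+1}-\pmb\alpha_k|=|\vec m_{k+1}|/(|\det(\vec m_k,\vec m_{k+1})|\,|\det(\vec m_{k+1},\vec m_{k+2})|)$, and $\Omega_{k+1}\subset\Omega_k$ reduces to
\[
\frac{2|\vec m_{k+1}|^2}{|\det(\vec m_{k+1},\vec m_{k+2})|}+\frac{|\vec m_{k+1}|\,|\det(\vec m_k,\vec m_{k+1})|}{|\vec m_{k+2}|\,|\det(\vec m_{k+1},\vec m_{k+2})|}\leq 1.
\]
Substituting the worst-case values from~(4), (5) at step $k$ and~(5) at step $k+1$, the left side is bounded by $(9-\sqrt 2)/9+\sqrt 2/9=1$, and it is this exact identity that pins down the choice $\gamma=18/(9-\sqrt 2)$.

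The main obstacle will be stage three. For any $\vec m\in\Z^2\setminus\spanned_\Z(\vec m_{k+1},\vec m_{k+2})$, Corollary~\ref{cor:distance_divisibility} applied at $\pmb\alpha_{k+1}$ forces $\|\langle\pmb\alpha_{k+1},\vec m\rangle\|\geq|\det(\vec m_{k+1},\vec m_{k+2})|^{-1}$, while the oscillation $|\langle\pmb\alpha-\pmb\alpha_{k+1},\vec m\rangle|$ over $\pmb\alpha\in\Omega_{k+1}$ is at most $R_{k+1}|\vec m|$; for $\vec m\in\spanned_\Z(\vec m_{k+1},\vec m_{k+2})$ the ratio bound $|\vec m_{k+2}|/|\vec m_{k+1}|\geq 3/\sqrt 2>2$ together with the obtuse-angle condition~(1) (a short Gauss-reduction argument) shows that no vector of norm less than $|\vec m_{k+2}|$ exists other than $\vec 0,\pm\vec m_{k+1}$, delivering~(3). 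For~(4), the delicate case is $\vec m\in\spanned_\Z(\vec m_k,\vec m_{k+1})$ of the form $\pm\vec m_k+b\vec m_{k+1}$, for which $\|\langle\pmb\alpha_{k+1},\vec m\rangle\|=|\det(\vec m_{k+1},\vec m_{k+2})|^{-1}$ coincides with the value at $\vec m_k$; here the one-sided structure of $\Omega_{k+1}$ (supported by $\ell_{k+1}$ on a definite side of $\pmb\alpha_{k+1}$) is used so that the oscillation $\delta_{k+1}$ in the $\vec m_{k+1}$ direction is signed, and the comparison $\|\langle\pmb\alpha,\vec m_k\rangle\|<\|\langle\pmb\alpha,\vec m\rangle\|$ reduces to a numeric inequality in the ratios controlled by~(4)--(6) which, once more, closes exactly at $\gamma=18/(9-\sqrt 2)$.
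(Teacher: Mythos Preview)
Your outline mirrors the paper's and the numerics you check for $|\pmb\alpha_{k+1}-\pmb\alpha_k|+R_{k+1}\leq R_k$ are right, but that inequality only yields $\tilde\Omega_{k+1}\subset\tilde\Omega_k$ (containment of the full balls), not $\Omega_{k+1}\subset\Omega_k$. To land inside the \emph{half}-ball $\Omega_k$ you must also place $\pmb\alpha_{k+1}$ on the correct side of the supporting line $\ell_k$, and this depends on a sign you never fix. Concretely, writing $\vec m_{k+2}=\lambda_1\vec m_k+\lambda_2\vec m_{k+1}$, the side of $\ell_k$ on which $\pmb\alpha_{k+1}$ falls is governed by the sign of $\lambda_1$ together with the sign of $\langle\pmb\alpha_k,\vec m_{k+2}\rangle-\langle\pmb\alpha_{k+1},\vec m_{k+2}\rangle$. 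The paper handles this by introducing $\delta_k\in\{\pm1\}$ recording which side of $\ell_k$ the set $\Omega_k$ occupies, selecting the specific coset $\vec w+\spanned_\Z(\vec m_k,\vec m_{k+1})$ with $\{\langle\pmb\alpha_k,\vec w\rangle\}$ \emph{positive} (your $L^\ast$ is the union of two such cosets and you never choose one), and building $\vec m_{k+2}$ near an explicit target $\vec v$ whose $\vec m_{k+1}^\bot$-component points along $\delta_k\vec m_{k+1}^\bot$, forcing $\lambda_1/|\lambda_1|=\delta_k$. Your half-plane restriction $\langle\vec x,\vec m_{k+1}\rangle\leq0$ pins down $\operatorname{sign}(\lambda_2)$ but leaves $\operatorname{sign}(\lambda_1)$ free, so the inclusion can fail.

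This sign issue propagates. Your specification of $\Omega_{k+1}$ as the half ``on the side containing $\pmb\alpha_k$'' is vacuous since, as you yourself observe, $\pmb\alpha_k\in\ell_{k+1}$; the paper instead takes the half of $\tilde\Omega_{k+1}$ closest to $\ell_k$, which is well-defined because $\pmb\alpha_{k+1}\notin\ell_k$. More importantly, your sketch of condition~(4) is off the mark: the paper's argument for~(4) is not another numeric inequality that ``closes at $\gamma$'' but a purely geometric comparison showing that, for $\pmb\alpha\in\Omega_{k+1}$, among nonzero points of $\spanned_\Z(\vec m_k,\vec m_{k+1})$ with norm below $|\vec m_{k+1}|$ the points $\pm\vec m_k$ minimise the distance to the line $\{\vec x:\langle\pmb\alpha,\vec x\rangle=\langle\pmb\alpha_k,\vec x\rangle\}$. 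This relies precisely on the correct choice of half together with the obtuse-angle hypothesis $\langle\vec m_k,\vec m_{k+1}\rangle\leq0$, not on any further tuning of $\gamma$.
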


  \begin{proof}
    Denote by $\tilde\Omega_k$ the ball, of which $\Omega_k$ is a half, and by $\tilde\Omega_{k+1}$
    the corresponding ball for $\Omega_{k+1}$, which is to be constructed.

    Define $\delta_k$ as follows. Set $\delta_k=1$ if for every $\vec x\in\Omega_k$
    \[ \langle\vec x,\vec m_k\rangle\leq \langle\pmb\alpha_k,\vec m_k\rangle, \]
    and set $\delta_k=-1$ if for every $\vec x\in\Omega_k$
    \[ \langle\vec x,\vec m_k\rangle\geq \langle\pmb\alpha_k,\vec m_k\rangle. \]
    Since $\ell_k$ supports $\Omega_k$, $\delta_k$ is defined correctly. Hence
    \[ \delta_k\langle\vec x,\vec m_k\rangle\leq\delta_k\langle\pmb\alpha_k,\vec m_k\rangle \]
    for every $\vec x\in\Omega_k$.

    There is exactly one integer point $\vec w$ in the parallelogram spanned by $\vec m_k$ and
    $\vec m_{k+1}$, such that the fractional part of $\langle\pmb\alpha_k,\vec w\rangle$ is minimal
    and positive, i.e.
    \[ \{\langle\pmb\alpha_k,\vec w\rangle\}=|\det(\vec m_k,\vec m_{k+1})|^{-1}. \]
    Denote by $\vec m_{k+1}^\bot$ the integer point satisfying the conditions
    $\langle\vec m_{k+1}^\bot,\vec m_{k+1}\rangle=0$, $|\vec m_{k+1}^\bot|=|\vec m_{k+1}|$ and
    $\langle\vec m_{k+1}^\bot,\vec m_k\rangle>0$. Consider the point
    \[ \vec v=
       \left(\psi_k^{-1}\frac{|\vec m_k|^2}{|\vec m_{k+1}|^2}\right)\delta_k\vec m_{k+1}^\bot-
       \sqrt{(2\gamma\psi_{k+1})^{-1}-
       \left(\psi_k^{-1}\frac{|\vec m_k|^2}{|\vec m_{k+1}|^2}\right)^2}
       \vec m_{k+1}. \]
    The subset of the affine lattice
    \[ \vec w+\spanned_\Z(\vec m_k,\vec m_{k+1}) \]
    consisting of points $\vec x$, such that the quantity
    $\langle\vec x-\vec v,\delta_k\vec m_{k+1}^\bot\rangle$ is minimal and non-negative, lies
    on a line parallel to $\vec m_{k+1}$. Define $\vec m_{k+2}$ to be the point of this set, such
    that the quantity $\langle\vec m_{k+2}-\vec v,-\vec m_{k+1}\rangle$ is minimal and
    non-negative. Notice that, due to the definition of the point $\vec m_{k+1}^\bot$, the sign of
    the coefficient $\lambda_1$ in the decomposition
    $\vec m_{k+2}=\lambda_1\vec m_k+\lambda_2\vec m_{k+1}$ is equal to that of $\delta_k$, i.e.
    \begin{equation} \label{eq:sign_of_lambda}
      \frac{\lambda_1}{|\lambda_1|}=\delta_k.
    \end{equation}
    We shall use this fact when defining $\Omega_{k+1}$. But now let us turn to the statements $5)$
    and $6)$, for their proof involves neither $\Omega_{k+1}$, nor $\alpha_{k+1}$.

    With $\vec m_{k+2}$ chosen as above the statement $5)$ follows from the inequalities
    \[ |\vec v|^2\leq|\vec m_{k+2}|^2<|\vec v|^2+|\vec m_{k+1}|^2+
       \left(\frac{\langle\vec m_k,\vec m_{k+1}^\bot\rangle}{|\vec m_{k+1}^\bot|}\right)^2<
       |\vec v|^2+\frac{|\vec m_{k+1}|^2}{2\gamma\psi_{k+1}}\,, \]
    the latter being a consequence of the relation
    \[ \frac{\langle\vec m_k,\vec m_{k+1}^\bot\rangle}{|\vec m_{k+1}^\bot|^2}=
       \frac{|\det(\vec m_k,\vec m_{k+1})|}{|\vec m_{k+1}|^2}<6\gamma^2\psi_k \]
    and the condition $\psi_k,\psi_{k+1}\leq(9\gamma)^{-1}$.

    As for the statement $6)$, it follows from the inequalities
    \[ \begin{split}
         \psi_k^{-1}\frac{|\vec m_k|^2}{|\vec m_{k+1}|^2}\leq
         \frac{\langle\vec m_{k+2},\vec m_{k+1}^\bot\rangle}{|\vec m_{k+1}^\bot|^2}< &\
         \psi_k^{-1}\frac{|\vec m_k|^2}{|\vec m_{k+1}|^2}+
         \frac{|\det(\vec m_k,\vec m_{k+1}|}{|\vec m_{k+1}|^2}\leq \\
         \leq &\ \psi_k^{-1}\frac{|\vec m_k|^2}{|\vec m_{k+1}|^2}+
         \frac{3\gamma|\vec m_k|^2}{|\vec m_{k+1}|^2}\leq
         (\psi_k^{-1}+3\gamma)\frac{|\vec m_k|^2}{|\vec m_{k+1}|^2}\,.
       \end{split} \]
    Indeed, taking into account that
    \[ \frac{|\det(\vec m_{k+1},\vec m_{k+2})|}{|\vec m_{k+1}|^2}=
       \frac{\langle\vec m_{k+2},\vec m_{k+1}^\bot\rangle}{|\vec m_{k+1}^\bot|^2} \]
    we get
    \[ \psi_k^{-1}\leq\dfrac{|\det(\vec m_{k+1},\vec m_{k+2})|}{|\vec m_k|^2}<
       \psi_k^{-1}+3\gamma. \]

    Now let us define $\pmb\alpha_{k+1}$ and $\Omega_{k+1}$ and prove the rest of the statements.
    Let us define $\pmb\alpha_{k+1}$ by the equalities
    \begin{equation} \label{eq:next_alpha_def_1}
      \langle\pmb\alpha_{k+1},\vec m_{k+1}\rangle=\langle\pmb\alpha_k,\vec m_{k+1}\rangle,
    \end{equation}
    \begin{equation} \label{eq:next_alpha_def_2}
      \langle\pmb\alpha_{k+1},\vec m_{k+2}\rangle=[\langle\pmb\alpha_k,\vec m_{k+2}\rangle].
    \end{equation}
    Note that, due to Lemma \ref{l:triple_switch}, for $\vec m_{k+2}$ and $\pmb\alpha_{k+1}$ thus
    chosen the statement $2)$ of the Lemma holds. It also follows from Lemma \ref{l:triple_switch}
    that the distance from $\pmb\alpha_{k+1}$ to $\ell_k$ is equal to
    \[ (|\vec m_k||\det(\vec m_{k+1},\vec m_{k+2})|)^{-1}, \]
    which in its turn implies that
    \[ |\pmb\alpha_{k+1}-\pmb\alpha_k|=
       \frac{|\vec m_{k+1}|}{|\det(\vec m_k,\vec m_{k+1})\det(\vec m_{k+1},\vec m_{k+2})|}\,. \]
    Hence
    \[ \begin{split}
         |\pmb\alpha_{k+1}-\pmb\alpha_k|+R_{k+1}=
         \frac{2|\vec m_{k+1}|^2}{|\det(\vec m_{k+1},\vec m_{k+2})|}R_k+
         \frac{|\det(\vec m_k,\vec m_{k+1})|}{|\det(\vec m_{k+1},\vec m_{k+2})|}
         \frac{|\vec m_{k+1}|}{|\vec m_{k+2}|}R_k< & \\ <
         \frac{2}{\gamma}R_k+3\gamma\psi_k\sqrt{2\gamma\psi_{k+1}}R_k< &\ R_k.
       \end{split} \]
    Here we have made use of the statements $5)$ and $6)$ we have already proved, the assumption
    $4)$, the condition $\psi_k,\psi_{k+1}\leq(9\gamma)^{-1}$ and the definition of $\gamma$. Thus,
    \[ \tilde\Omega_{k+1}\subset\tilde\Omega_k. \]
    More than that, $\tilde\Omega_{k+1}$ is contained either in $\Omega_k$, or in
    $\tilde\Omega_k\backslash\Omega_k$, since
    \[ R_{k+1}<(|\vec m_k||\det(\vec m_{k+1},\vec m_{k+2})|)^{-1} \]
    and the righthand side of this inequality, as we have already noticed before, is the distance
    from $\pmb\alpha_{k+1}$ to $\ell_k$.

    Let us prove that $\tilde\Omega_{k+1}\subset\Omega_k$. By \eqref{eq:next_alpha_def_2},
    \[ \langle\pmb\alpha_{k+1},\vec m_{k+2}\rangle<\langle\pmb\alpha_k,\vec m_{k+2}\rangle, \]
    so, if $\vec m_{k+2}=\lambda_1\vec m_k+\lambda_2\vec m_{k+1}$, then, due to
    \eqref{eq:next_alpha_def_1},
    \[ \langle\pmb\alpha_{k+1},\lambda_1\vec m_k\rangle<
       \langle\pmb\alpha_k,\lambda_1\vec m_k\rangle, \]
    which, in view of \eqref{eq:sign_of_lambda}, implies that
    \[ \delta_k\langle\pmb\alpha_{k+1},\vec m_k\rangle<
       \delta_k\langle\pmb\alpha_k,\vec m_k\rangle. \]
    This shows that
    \[ \tilde\Omega_{k+1}\subset\Omega_k. \]

    To define $\Omega_{k+1}$ it remains to choose between the two parts of $\tilde\Omega_{k+1}$
    separated by the line $\ell_{k+1}$. Corollary \ref{cor:distance_divisibility} together with the
    definition of $R_{k+1}$ implies that the statement $3)$ of the Lemma holds for every
    $\pmb\alpha\in\tilde\Omega_{k+1}$, so we have to specify one of the two halves of
    $\tilde\Omega_{k+1}$ only to provide the statement $4)$.

    Between the described two halves of $\tilde\Omega_{k+1}$ let us choose to be $\Omega_{k+1}$ the
    one that is closest to $\ell_k$.

    Let us prove the statement $4)$. Notice first that for each $\pmb\alpha\in\Omega_k$ the linear
    form $\langle\pmb\alpha,\cdot\rangle$ does not attain integer values at any point of the set
    $\Z^2\backslash\spanned_\Z(\vec m_k,\vec m_{k+1})$ with absolute value not exceeding
    $|\vec m_{k+1}|$. At the same time for every
    $\vec m\in\pm\vec w+\spanned_\Z(\vec m_k,\vec m_{k+1})$ we have
    \[ \|\langle\pmb\alpha_k,\vec m\rangle\|=|\det(\vec m_k,\vec m_{k+1})|^{-1}. \]
    Hence, taking into account the assumptions $4)$ and $5)$, we see that for every
    $\pmb\alpha\in\Omega_k$ and every $\vec m\in\spanned_\Z(\vec m_k,\vec m_{k+1})$, such that
    $|\vec m|\leq|\vec m_{k+1}|$, we have
    \[ \big|\langle\pmb\alpha,\vec m\rangle-\langle\pmb\alpha_k,\vec m\rangle\big|<1/2. \]
    This means that for any $\pmb\alpha\in\Omega_k$, non-collinear with $\pmb\alpha_k$, and any two
    points $\vec m',\vec m''\in\spanned_\Z(\vec m_k,\vec m_{k+1})$, such that
    $|\vec m'|,|\vec m''|\leq|\vec m_{k+1}|$, the quotient
    \[ \|\langle\pmb\alpha,\vec m'\rangle\|\Big/\|\langle\pmb\alpha,\vec m''\rangle\| \]
    is equal to the quotient of distances from the points $\vec m'$ and $\vec m''$ to the line
    \begin{equation} \label{eq:line}
      \big\{ \vec x\in\R^2\, \big|\,
      \langle\pmb\alpha,\vec x\rangle=\langle\pmb\alpha_k,\vec x\rangle \big\}.
    \end{equation}
    Due to the choice of $\Omega_{k+1}$ and the assumption $1)$, for every
    $\pmb\alpha\in\Omega_{k+1}$ the points $\pm\vec m_k$ are closer to the line \eqref{eq:line}
    than any other non-zero point of the set $\spanned_\Z(\vec m_k,\vec m_{k+1})$ with the absolute
    value less than $|\vec m_{k+1}|$. This implies the statement $4)$.
  \end{proof}

  Let us describe now the base of induction. Set $\psi_1=\psi(1)$ and
  \[ \vec m_1=(1,0),\quad
     \vec m_2=\Big(\Big\lceil\sqrt{(2\gamma\psi_1)^{-1}-\gamma^2}\Big\rceil,3\Big),\quad
     \pmb\alpha_1=\big(0,3^{-1}\big). \]
  It is easily verified that the assumptions $1)$, $2)$, $4)$, $5)$ of Lemma \ref{l:induction_step}
  are satisfied for these points. Setting
  \[ R_1=(2|\vec m_2||\det(\vec m_1,\vec m_2)|)^{-1}, \]
  choosing as $\Omega_1$ any of the two corresponding half-balls and taking into account Corollary
  \ref{cor:distance_divisibility}, we see that all the assumptions of Lemma
  \ref{l:induction_step} are fulfilled. This gives the induction base.

  Setting $\psi_k=\psi(|\vec m_k|)$ and applying Lemma \ref{l:induction_step}
  consequently for $k=2,3,4,\ldots$\ . We can do so since the assumption $5)$ together with the
  statement $6)$ of Lemma \ref{l:induction_step} for a fixed $k\in\Z_+$ imply the
  assumption  $4)$ with $k$ substituted by $k+1$. Thus we get a sequence
  $\{\Omega_k\}_{k=1}^\infty$ of embedded half-balls with a common point $\pmb\alpha$ and a
  sequence $\{\vec m_k\}_{k=1}^\infty$. It follows from the statement $4)$ of Lemma
  \ref{l:induction_step} that for each $k\in\Z_+$ the pair $\pm\vec m_k$ is the
  $k$-th pair of best approximations for $\langle\pmb\alpha,\cdot\rangle$, whereas due to the
  statements $5)$ and $6)$ we have for each $k\in\Z_+$ the inequalities
    \[ \psi_k^{-1}\leq\dfrac{|\det(\vec m_{k+1},\vec m_{k+2})|}{|\vec m_k|^2}<
       \psi_k^{-1}+3\gamma<(\psi_k-3\gamma\psi_k^2)^{-1} \]
  and
  \[ R_{k+1}|\vec m_k|^3\leq
     \Big(2(2\gamma\psi_{k+1})^{-1/2}(2\gamma\psi_k)^{-1/2}\psi_k^{-1}\Big)^{-1}\leq
     \gamma\psi_k^2. \]

  Hence for all $\pmb\alpha\in\Omega_{k+1}$
  \[ \psi_k-4\gamma\psi_k^2<
     \|\langle\pmb\alpha,\vec m_k\rangle\|\cdot|\vec m_k|^2\leq\psi_k+\gamma\psi_k^2, \]
  which proves Theorem \ref{t:main}.

  \begin{remark}
    In a similar way we can apply Lemma \ref{l:induction_step} to prove a bit different fact.
    Namely, within the assumptions of Theorem \ref{t:main} we can prove that there are continuously
    many forms $\langle\pmb\alpha,\cdot\rangle$, such that their best approximations $\vec m_k$
    satisfy the condition
    \[ \psi(k)-4\gamma\psi(k)^2<\|\langle\pmb\alpha,\vec m_k\rangle\|\cdot|\vec m_k|^2\leq
       \psi(k)+\gamma\psi(k)^2. \]
    To this effect we just have to put $\psi_k=\psi(k)$ and repeat the arguments from the proof of
    Theorem \ref{t:main}.
  \end{remark}

  \begin{remark}
    It is not difficult to improve the constants in Theorem \ref{t:main}. However, the wish to make
    better constants, if realized, would make the proof look more cumbersome and blur the main
    ideas.
  \end{remark}

\newpage

\vskip1.5cm

\noindent
Oleg N. {\sc German} \\
Moscow Lomonosov State University \\
Vorobiovy Gory, GSP--1 \\
119991 Moscow, RUSSIA \\
\emph{E-mail}: {\fontfamily{cmtt}\selectfont german@mech.math.msu.su, german.oleg@gmail.com}

\vskip1.0cm

\noindent Nikolay  G. {\sc Moshchevitin} \\
Moscow Lomonosov State University \\
Vorobiovy Gory, GSP--1 \\
119991 Moscow, RUSSIA \\
\emph{E-mail}: {\fontfamily{cmtt}\selectfont moshchevitin@mech.math.msu.su,
moshchevitin@rambler.ru}

\end{document}